\documentclass[12pt,reqno]{amsart}

\usepackage[left=3.3cm,right=3.3cm]{geometry}
\usepackage{amssymb}
\usepackage{graphicx}
\usepackage{amscd}
\usepackage[pagebackref]{hyperref}
\usepackage{color}
\usepackage{tabularx}
\usepackage[table]{xcolor}
\usepackage{float}
\usepackage{graphics,amsmath,amssymb}
\usepackage{amsthm}
\usepackage{amsfonts}
\usepackage{latexsym}
\usepackage{epsf}
\usepackage{xifthen}
\usepackage{mathrsfs}
\usepackage{dsfont}
\usepackage{makecell}
\usepackage{subfig}
\usepackage{amsmath}
\allowdisplaybreaks[4]
\usepackage{listings}
\usepackage{etoolbox}
\usepackage{fancyhdr}
\usepackage{pdflscape}
\usepackage[title,toc,titletoc]{appendix}
\usepackage{enumitem}
\usepackage[noadjust]{cite}
\usepackage{tikz}
\usetikzlibrary{automata,positioning,arrows}
\usepackage{young}
\usepackage[object=vectorian]{pgfornament} %%  http://altermundus.com/pages/tkz/ornament/index.html
\usepackage{lipsum,tikz}
\usepackage{multirow}
\usepackage[OT2,T1]{fontenc}
\usepackage{mathtools}
\usepackage{ytableau}

\hypersetup{
	colorlinks=true, %set true if you want colored links
	linktoc=all, %set to all if you want both sections and subsections linked
	linkcolor=blue} %choose some color if you want links to stand out

\numberwithin{equation}{section}

\theoremstyle{theorem}
\newtheorem{theorem}{Theorem}[section]
\newtheorem*{theorem*}{Theorem}

\newtheorem{lemma}[theorem]{Lemma}

\newtheorem{innercustomgeneric}{\customgenericname}
\providecommand{\customgenericname}{}
\newcommand{\newcustomtheorem}[2]{%
	\newenvironment{#1}[1]
	{%
		\renewcommand\customgenericname{#2}%
		\renewcommand\theinnercustomgeneric{##1}%
		\innercustomgeneric
	}
	{\endinnercustomgeneric}
}
\newcustomtheorem{ctheorem}{Theorem}
\newcustomtheorem{clemma}{Lemma}

\theoremstyle{definition}

\newtheorem*{example*}{Example}
\newtheorem*{examples*}{Examples}

\newtheorem*{remark*}{Remark}
\newtheorem*{remarks*}{Remarks}
\newtheorem*{note*}{Note}

\newtheoremstyle{named}{}{}{\itshape}{}{\bfseries}{.}{.5em}{\thmnote{#3} #1}
\theoremstyle{named}

\newtheoremstyle{customized}{}{}{\itshape}{}{\bfseries}{.}{.5em}{\thmnote{#3}}
\theoremstyle{customized}
\newtheorem*{customizedempty}{}

\DeclareMathAlphabet{\mydutchcal}{U}{dutchcal}{m}{n}

\newcommand{\qbinom}[2]{{\genfrac{[}{]}{0pt}{}{#1}{#2}}}

\newcommand{\qangle}[2]{{\genfrac{\langle}{\rangle}{0pt}{}{#1}{#2}}}

\newcommand{\LHS}{\operatorname{LHS}}
\newcommand{\RHS}{\operatorname{RHS}}

\newcommand{\SYT}{\operatorname{SYT}}
\newcommand{\Des}{\operatorname{Des}}
\newcommand{\des}{\operatorname{des}}
\newcommand{\maj}{\operatorname{maj}}
\newcommand{\comaj}{\operatorname{comaj}}

\title[Cigler's conjecture on $q$-Hoggatt numbers]{Proof of Cigler's conjecture on $q$-Hoggatt numbers}

\author[S. Chern]{Shane Chern}
\address[S. Chern]{Fakult\"at f\"ur Mathematik, Universit\"at Wien, Oskar-Morgenstern-Platz 1, Wien 1090, Austria}
\email{chenxiaohang92@gmail.com, xiaohangc92@univie.ac.at}

\author[W. Shi]{Wenle Shi}
\address[W. Shi]{School of Mathematical Sciences, Dalian University of Technology, Dalian 116024, P.R. China}
\email{shi-wenle@hotmail.com}

\date{}

\keywords{$q$-Hoggatt number, nonnegativity, palindromicity, standard Young tableau.}

\subjclass[2020]{05A19, 05A15, 05E05, 11B65.}

\begin{document}
	
\sloppy

\begin{abstract}
	We prove the nonnegativity and palindromicity of a family of polynomials arising from $q$-Hoggatt numbers. The nonnegativity is derived from Stanley's $(P,\omega)$-partition theory through a standard Young tableau formula, while the palindromicity is proved by an involution on rectangular standard Young tableaux. Our result confirms a conjecture of Cigler.
\end{abstract}

\maketitle

\section{Introduction}

A polynomial $p(x)$ is \emph{palindromic} if the sequence of its coefficients reads the same forwards from the lowest power to the highest as it does backwards. In other words, $p(x)$ satisfies the condition
\begin{align*}
	p(x) = x^N p(x^{-1}),
\end{align*}
where $N$ is the sum of the lowest and highest exponents. A classic example of palindromic polynomials is the \emph{$q$-binomial coefficient}
\begin{align*}
	\qbinom{n}{k}_q:=\begin{cases}
		\dfrac{(q;q)_n}{(q;q)_k(q;q)_{n-k}}, & \text{if $0\le k\le n$},\\[10pt]
		0, & \text{otherwise},
	\end{cases}
\end{align*}
where the \emph{$q$-Pochhammer symbol} is defined by
\begin{align*}
	(a;q)_m&:=\prod_{j=0}^{m-1} (1-a q^j).
\end{align*}
As a polynomial in $q$, the $q$-binomial coefficient $\qbinom{n}{k}_q$ has lowest exponent $0$ and highest exponent $k(n-k)$, while it is a standard result \cite[p.~353, eq.~(I.47)]{GR2004} that
\begin{align*}
	\qbinom{n}{k}_{q^{-1}} = q^{k(k-n)} \qbinom{n}{k}_q.
\end{align*}

In \cite{Cig2021}, Cigler introduced a $q$-analog of Hoggatt numbers originating from \cite{FA1989}, and conjectured a palindromic property for a relevant series. Let
\begin{align*}
	\langle n\rangle_q^{(d)} := \qbinom{n+d-1}{d}_q,\qquad\qquad \langle n\rangle_q^{(d)}! := \prod_{j=1}^n \langle j\rangle_q^{(d)}.
\end{align*}
Cigler's \emph{$q$-Hoggatt numbers} are defined by
\begin{align}\label{eq:q-Hoggatt-def}
	\qangle{n}{k}_q^{(d)} := \frac{\langle n\rangle_q^{(d)}!}{\langle k\rangle_q^{(d)}! \langle n-k\rangle_q^{(d)}!} = \prod_{j=0}^{d-1} \frac{\qbinom{n+j}{k}_q}{\qbinom{k+j}{k}_q}.
\end{align}
For $d=1$, $2$, and $3$, the $q$-Hoggatt numbers $\qangle{n}{k}_q^{(d)}$ reduce to the $q$-binomial coefficients, the $q$-Narayana numbers, and the $q$-Baxter numbers, respectively.

Cigler \cite[Conjecture~10]{Cig2021} proposed the following conjecture  on $q$-Hoggatt numbers.

\begin{customizedempty}[Cigler's Conjecture]
	Let $d$ and $k$ be any positive integers. The series
	\begin{align}
		C_k^{(d)}(x,q) := (x;q)_{kd+1} \sum_{n\ge 0} \qangle{n+k}{k}_q^{(d)} x^n
	\end{align}
	is a bivariate polynomial in $x$ and $q$ with nonnegative coefficients, with the highest exponent of $x$ being $(k-1)(d-1)$. Writing
	\begin{align*}
		C_k^{(d)}(x,q) =: \sum_{j=0}^{(k-1)(d-1)} c_{k,j}^{(d)}(q) x^j,
	\end{align*}
	it is further true that $c_{k,j}^{(d)}(q)$ is palindromic in $q$ for every $k$.
\end{customizedempty}

The main purpose of this note is to prove Cigler's conjecture by means of Stanley's $(P,\omega)$-partition theory \cite{Sta1970,Sta1971,Sta1972}.

Recall that an \emph{integer partition} $\lambda$ is a weakly decreasing sequence of positive integers known as \emph{parts}; the size of this partition, denoted by $|\lambda|$, is the sum of all its parts. We may visually represent a partition $\lambda = (\lambda_1,\lambda_2,\ldots,\lambda_l)$ by its \emph{Young diagram}, which is depicted by boxes placed in rows such that there are $\lambda_i$ boxes in the $i$-th row.

Let $\lambda$ be a fixed partition. A \emph{standard Young tableau of shape $\lambda$} is a filling $T$ of the 
boxes in the Young diagram of $\lambda$ with $1,2,\ldots,|\lambda|$, each used exactly once, such that the 
entries are strictly increasing along rows and down columns. Let  $\SYT(\lambda)$ denote the set of such tableaux.

For $T\in\SYT(\lambda)$ and $1\le i\le |\lambda|$, let
\begin{align*}
	\rho_i=(r_i,s_i):=T^{-1}(i)
\end{align*}
denote the box occupied by $i$ in $T$. Define the \emph{descent set}
\begin{align*}
	\Des(T) := \{i:1\le i\le |\lambda|-1,\ r_{i+1}>r_i\}.
\end{align*}
We require the \emph{descent} statistic
\begin{align*}
	\des(T) := |\Des(T)|,
\end{align*}
and the \emph{comajor index} statistic
\begin{align*}
	\comaj(T) := \sum_{i\in \Des(T)} (|\lambda| - i).
\end{align*}

Our main result is to express Cigler's series $C_k^{(d)}(x,q)$ as the generating function for certain standard Young tableaux of rectangular shapes, thereby indicating Cigler's conjecture. To start with, we let
\begin{align*}
	(k^d) := (\underbrace{k,k,\ldots,k}_{\text{$d$ parts}}),
\end{align*}
the partition into $d$ parts of size $k$. Its Young diagram has a rectangular shape with $d$ rows and $k$ columns.

\begin{theorem}\label{th:main}
	For every positive integers $d$ and $k$,
	\begin{align}\label{eq:gf-main}
		(x;q)_{kd+1} \sum_{n\ge 0} \qangle{n+k}{k}_q^{(d)} x^n = \sum_{T\in \SYT(k^d)} q^{\comaj(T) - k \binom{d}{2}} x^{\des(T) - (d-1)}.
	\end{align}
	Moreover, Cigler's conjecture is true. In particular, the coefficients at the lowest and highest powers of $x$ are respectively
	\begin{align*}
		c_{k,0}^{(d)}(q) &= 1, \qquad\qquad c_{k,(k-1)(d-1)}^{(d)}(q) = q^{2\binom{k}{2}\binom{d}{2}}.
	\end{align*}
\end{theorem}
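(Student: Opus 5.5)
Write $N=kd$. The plan is to read $\qangle{n+k}{k}_q^{(d)}$ as a generating function for plane partitions (reverse $(P,\omega)$-partitions) on the rectangle $(k^d)$ with bounded entries, and then apply Stanley's fundamental lemma.

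\textbf{Step 1: the product as a bounded plane-partition count.} By \eqref{eq:q-Hoggatt-def},
\begin{align*}
	\qangle{n+k}{k}_q^{(d)}=\prod_{j=0}^{d-1}\frac{\qbinom{n+k+j}{k}_q}{\qbinom{k+j}{k}_q}.
\end{align*}
I will compare this with MacMahon's box formula (in its hook-content form) for plane partitions fitting inside a $d\times k\times n$ box, namely
\begin{align*}
	\prod_{i=1}^{d}\prod_{j=1}^{k}\frac{1-q^{n+i+j-1}}{1-q^{i+j-1}}.
\end{align*}
Grouping the factors by $i$ and telescoping in $j$ gives $\prod_{i}\qbinom{n+k+i-1}{k}_q/\qbinom{k+i-1}{k}_q$, which is exactly the right-hand product. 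Hence $\qangle{n+k}{k}_q^{(d)}=\sum_\pi q^{|\pi|}$, the sum over fillings $\pi$ of $(k^d)$ with entries in $\{0,\dots,n\}$, weakly decreasing along rows and down columns.

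\textbf{Step 2: Stanley's lemma.} For the poset of cells of $(k^d)$, Stanley's theory decomposes the set of such order-reversing maps according to linear extensions, which are identified with $\SYT(k^d)$. Each $T$ contributes the fillings $\pi$ compatible with $T$: the entries are weakly decreasing along the reading order $1,\dots,N$, with strict decrease forced exactly at the descents of $T$ for the chosen labeling $\omega$. I would choose $\omega$ as the row-reading labeling, so that the forced strict steps are precisely the $i$ with $r_{i+1}>r_i$.

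The standard computation then gives
\begin{align*}
	\sum_{n\ge0}x^n\sum_{\pi\le n}q^{|\pi|}=\frac{\sum_T x^{a(T)}q^{b(T)}}{(x;q)_{N+1}},
\end{align*}
where $a(T)$ counts forced strict steps and $b(T)$ is the sum of the corresponding prefix lengths, giving something of the form $\sum_{i\in\Des}i$ or $\sum (N-i)$. The main bookkeeping obstacle is matching these conventions to $\des(T)-(d-1)$ and $\comaj(T)-k\binom d2$. The shifts presumably arise because every SYT of a rectangle has at least $d-1$ descents and the minimal comaj is $k\binom d2$, attained by the row-superstandard tableau. If the natural labeling produces descents with respect to the transposed convention or $\maj$ instead of $\comaj$, I will either transpose the tableau or use the involution $i\mapsto N+1-i$ combined with a $180^\circ$ rotation, which maps $\SYT(k^d)$ to itself. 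Verifying the case $k=1$, where the right side should equal $1$, will fix the normalization. This establishes \eqref{eq:gf-main}, and nonnegativity and polynomiality follow immediately.

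\textbf{Step 3: the degree bound and the extreme coefficients.} The maximal number of descents in $\SYT(k^d)$ is $N-k=(k-1)(d-1)+(d-1)$, attained uniquely by the column-reading tableau. Together with the uniquely attained minimum $d-1$, this gives the top $x$-degree $(k-1)(d-1)$. The coefficient $c_{k,0}^{(d)}(q)=1$ comes from the unique row-reading tableau, and $c_{k,(k-1)(d-1)}^{(d)}(q)$ is obtained by computing the comaj of the column-reading tableau and subtracting $k\binom d2$, which should yield $2\binom k2\binom d2$.

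\textbf{Step 4: palindromicity.} This is the real obstacle. For fixed $\des(T)=m$, I need an involution $\phi$ on $\{T\in\SYT(k^d):\des(T)=m\}$ with $\comaj(T)+\comaj(\phi(T))$ constant. Rotation by $180^\circ$ with $i\mapsto N+1-i$ preserves the shape and sends the descent set $D$ to $N-D$, with descents read in the complementary sense. I will first try rotation composed with evacuation/Schützenberger-type maps so that $\des$ is preserved while $\Des$ is reflected. Concretely, I would test whether evacuation composed with rotation sends $\Des(T)$ to $\{N-i:i\in\Des(T)\}$, which would make $\comaj$ become $\sum_{i\in\Des} i$, so that $\comaj+\comaj'=mN$, a constant. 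If this exact reflection fails, my fallback is to use the symmetric-function identity $\sum_T q^{\maj(T)}$ restricted to fixed descent number, via the fundamental quasisymmetric expansion of $s_{(k^d)}$ and the invariance of $s_\lambda$ under $\omega$ combined with reversal.
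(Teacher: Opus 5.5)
\textbf{There is a genuine gap in Step~2.} Your Step~1 is correct; it is the same hook-content product the paper obtains from the principal specialization of $s_{(k^d)}$.

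Plane partitions carry no strictness condition, so Stanley's lemma must be applied with a \emph{natural} labeling $\omega$. Step $i$ is then forced strict exactly when $\omega(\rho_i)>\omega(\rho_{i+1})$. For the row-reading labeling this means $r_{i+1}<r_i$, not $r_{i+1}>r_i$.

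In fact no natural labeling can produce $\Des(T)$. Whenever $2$ sits at $(2,1)$ (e.g.\ in the column-reading tableau), $1\in\Des(T)$, yet $\omega(1,1)<\omega(2,1)$, so step $1$ is not forced strict. More decisively, the coefficient of $x^0$ in $\sum_n x^n\sum_{\pi\le n}q^{|\pi|}$ is $1$, coming from the zero filling. But every $T$ has $\des(T)\ge d-1$, so the identity with $a(T)=\des(T)$ is false for all $d\ge 2$.

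Stanley's computation is exact, so the factors $x^{-(d-1)}$ and $q^{-k\binom d2}$ cannot be produced afterwards by the normalizations you list. The check $k=1$ is passed trivially by both versions, so it detects nothing. What the natural labeling actually gives is $\sum_T x^{|A(T)|}q^{\sum_{i\in A(T)}i}$ with $A(T)=\{i:r_{i+1}<r_i\}$. This has the same total but is a different statistic: for shape $(3,3)$, the tableau with first row $1,2,5$ contributes $xq^4$ here versus $xq^2$ in the theorem. Neither transposition nor $\theta$ turns $A(T)$ into $\Des(T)$; matching would need a separate equidistribution proof. Your Steps~3 and~4 are phrased in terms of $\Des(T)$, so they do not apply to $A(T)$.

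\textbf{The missing idea} is to change the objects \emph{before} applying the fundamental lemma. The map $\tau(r,s)=\pi(r,s)+(d-r)$ is a bijection from plane partitions with entries in $\{0,\dots,n\}$ onto column-strict (reverse semistandard) fillings with entries in $\{0,\dots,n+d-1\}$, and $|\tau|=|\pi|+k\binom d2$. For column-strict fillings, use the labeling that makes column covers strict: columns left to right, each read bottom to top. Its descent set on $T$ is exactly $\Des(T)$. The computation then gives $x^{\des(T)}q^{\maj(T)}/(x;q)_{kd+1}$ per tableau. The shifted upper bound and weight are precisely the source of $x^{d-1}$ and $q^{k\binom d2}$. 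You must still convert $\maj$ to $\comaj$, either by working with weakly increasing fillings or by applying $\theta$. This is exactly the content of Stanley's Proposition~7.19.12, which the paper invokes.

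\textbf{Smaller points.}
\begin{itemize}
\item Step~3: the extremal values of $\des$ and their uniqueness need an argument. The paper uses that $u_{i+1}-1$ is a descent for the first-column entries $u_i$, and that $v_{j+1}-1$ is a non-descent for the first-row entries $v_j$.
\item Step~4: you do not need evacuation. The map $\theta(T)(r,s)=kd+1-T(d+1-r,k+1-s)$ places $i$ in row $d+1-r_{kd+1-i}$. Hence $\Des(\theta(T))=\{kd-i:i\in\Des(T)\}$ exactly, with no ``complementary sense'', and so $\comaj(\theta(T))=kd\,\des(T)-\comaj(T)$. On a rectangle, evacuation coincides with $\theta$, so your first attempt, evacuation composed with rotation, is the identity and would fail.
\item Polynomiality in $q$: once the identity is correctly established, your claim that it follows immediately is right. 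The left side has coefficients in $\mathbb{Z}[q]$, which bypasses the paper's bound $\comaj(T)\ge k\binom d2$.
\end{itemize}
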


This note is organized as follows. In Section~\ref{sec:comb}, we interpret Cigler's series $C_k^{(d)}(x,q)$ combinatorially by proving \eqref{eq:gf-main}; we also determine the coefficients of the two endpoint powers of $x$. Next, in Section~\ref{sec:palindromicity}, we establish the palindromicity for all coefficients $c_{k,j}^{(d)}(q)$; our argument is built upon a natural involution on rectangular standard Young tableaux. Finally, we close this paper with further discussions on Fibonacci--Hoggatt numbers and resolve another conjecture of Cigler in Section~\ref{sec:rmk}.

\section{Main Theorem, Part I: Combinatorics}\label{sec:comb}

In this part, we prove \eqref{eq:gf-main}. Our starting point is the following observation.

\begin{lemma}
	For every nonnegative integer $n$ and every positive integers $d$ and $k$,
	\begin{align}\label{eq:gf-middle}
		\qangle{n+k}{k}_q^{(d)} = \sum_{T\in \SYT(k^d)} q^{\comaj(T)-k \binom{d}{2}} \qbinom{(n+d-1)-\des(T)+kd}{kd}_q.
	\end{align}
\end{lemma}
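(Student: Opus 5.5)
We prove \eqref{eq:gf-middle} by realising its left-hand side as a generating function for plane partitions in a $d\times k$ rectangle with bounded entries, and then expanding that generating function over linear extensions, i.e.\ over $\SYT(k^d)$, via Stanley's $(P,\omega)$-partition theory.

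\emph{Step 1: closed form of the left-hand side.} From \eqref{eq:q-Hoggatt-def},
\begin{align*}
	\qangle{n+k}{k}_q^{(d)}=\prod_{j=0}^{d-1}\frac{\qbinom{n+k+j}{k}_q}{\qbinom{k+j}{k}_q}.
\end{align*}
We rewrite this as a MacMahon-type product $\prod_{i=1}^{d}\prod_{j=1}^{k}\frac{1-q^{n+i+j-1}}{1-q^{i+j-1}}$. This follows from a routine telescoping of $q$-Pochhammer symbols, checking the cancellation column by column. This product is the generating function, by size, of plane partitions fitting in a $d\times k\times n$ box. Equivalently, it counts fillings $\pi$ of the $d\times k$ rectangle with entries in $\{0,\dots,n\}$ that weakly decrease along rows and down columns.

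\emph{Step 2: $(P,\omega)$-decomposition.} Take $P$ to be the poset on the cells of $(k^d)$. Use the natural labelling given by reading the rows, so that each linear extension corresponds to a standard Young tableau $T$. By Stanley's fundamental lemma, the set of weakly order-reversing maps $\pi\colon P\to\mathbb{Z}_{\ge0}$ splits disjointly by linear extension. The piece belonging to $T$ consists of sequences
\begin{align*}
	\pi(\rho_1)\ge\pi(\rho_2)\ge\cdots\ge\pi(\rho_{kd}),
\end{align*}
with the inequality at step $i$ strict exactly when $i$ is a descent of the chosen labelling. The descents of the row-reading labelling translate into $\Des(T)$ as defined: $i$ is a descent when $i+1$ lies in a lower row.

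Now impose the bound $\pi(\rho_1)\le n$. We subtract the staircase forced by the strict steps. For $i\in\Des(T)$, the strictness at step $i$ raises the parts $\rho_1,\dots,\rho_i$ by one each. This contributes $q^{\maj(T)}$, and the bound shrinks to $n-\des(T)$. The remaining weakly decreasing sequences of $kd$ entries in $\{0,\dots,n-\des(T)\}$ have generating function $\qbinom{n-\des(T)+kd}{kd}_q$. This yields
\begin{align*}
	\sum_{T}q^{\maj(T)}\qbinom{n-\des(T)+kd}{kd}_q .
\end{align*}

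\emph{Step 3: matching the normalisation.} The target formula has the bound $n+d-1$ instead of $n$ and the weight $q^{\comaj(T)-k\binom d2}$. We account for both by choosing the order-preserving (reverse) orientation instead, equivalently by replacing $\pi$ by $n-\pi$ and using the complementary labelling, so that descents are read from the other end and $\maj$ becomes $\comaj$.

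Every $T\in\SYT(k^d)$ has at least $d-1$ descents: the first entries of rows $2,\dots,d$ each start a new lower row. Accordingly, we work with the strict-column convention. Fillings that are strict down columns, with entries in $\{0,\dots,n+d-1\}$, biject with the weak ones by adding $d-i$ to row $i$. This shift accounts for the bound $n+d-1$, and it removes a total weight of $k\binom d2$ from the size, which is exactly the correction $q^{-k\binom d2}$.

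We expect the main obstacle to be this bookkeeping. We must choose the labelling $\omega$ and the direction of the inequalities so that the strict steps are exactly $\Des(T)$ as defined through rows, and so that the size shift is $\comaj$ and not $\maj$. We will settle these choices by checking the case $k=1$ (a single column, one tableau, $\des=d-1$) and the case $d=2$ against \eqref{eq:q-Hoggatt-def}.
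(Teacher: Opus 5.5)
Your overall plan (MacMahon's box formula plus a $(P,\omega)$-decomposition over $\SYT(k^d)$) can be made to work, but as written there is a genuine gap. Step~2, the one concrete computation, is false, and Step~3, where the actual proof would have to happen, leaves the conventions unresolved.

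With the natural row-reading labelling $\omega(r,s)=(r-1)k+s$ and fillings weak in both directions, step $i$ of the chain is strict exactly when $\omega(\rho_i)>\omega(\rho_{i+1})$. That means $r_{i+1}<r_i$, so $i+1$ lies in a \emph{higher} row; it does not mean $i\in\Des(T)$. Consequently $\sum_T q^{\maj(T)}\qbinom{n-\des(T)+kd}{kd}_q$ is not MacMahon's product. For $k=1$, $d=2$, $n=0$ it equals $q\qbinom{1}{2}_q=0$, whereas the left-hand side of the lemma is $1$. No natural labelling can fix this: for $k=1$ the poset is a chain and its linear extension has no descents, yet $\des(T)=d-1$. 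The statistic $\Des(T)$ only appears once columns are strict.

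Step~3 recognises this but combines incompatible choices. The shift ``add $d-i$ to row $i$'' belongs to the decreasing (reverse-tableau) orientation. There the staircase for $i\in\Des(T)$ is removed from $\pi(\rho_1),\dots,\pi(\rho_i)$ and yields $\maj(T)$, not $\comaj(T)$. The complement that would switch orientation is $\pi\mapsto n+d-1-\pi$, not $n-\pi$. It turns a generating function $F(q)$ into $q^{kd(n+d-1)}F(q^{-1})$, so you would also need the palindromicity of MacMahon's product to get back. Checking $k=1$ and $d=2$ cannot settle these choices for general $k,d$.

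The clean repair is to use the increasing orientation throughout. Put $N=n+d-1$ and take fillings $\pi$ of $(k^d)$ with entries in $\{0,\dots,N\}$, weakly increasing along rows and strictly increasing down columns.
\begin{itemize}
\item These are the order-preserving $(P,\omega)$-partitions for $\omega(r,s)=(d-r)k+s$, which increases along rows and decreases down columns.
\item For the linear extension given by $T$, the step $\pi(\rho_i)\le\pi(\rho_{i+1})$ is forced strict exactly when $\omega(\rho_i)>\omega(\rho_{i+1})$. This happens iff $r_{i+1}>r_i$, i.e.\ iff $i\in\Des(T)$.
\item Subtract $|\{i\in\Des(T):i<j\}|$ from $\pi(\rho_j)$. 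This removes $\sum_{i\in\Des(T)}(kd-i)=\comaj(T)$ and lowers the bound to $N-\des(T)$. So these fillings have generating function $\sum_{T}q^{\comaj(T)}\qbinom{N-\des(T)+kd}{kd}_q$.
\item Subtracting $i-1$ from row $i$ is a bijection onto fillings weakly increasing in rows and columns with entries in $\{0,\dots,n\}$. After a $180^\circ$ rotation these are exactly the plane partitions in the $d\times k\times n$ box, and the size drops by $k\binom{d}{2}$.
\end{itemize}
Together with your Step~1, this proves the lemma. Alternatively, keep the decreasing orientation, obtain $\maj(T)$, and convert using the involution $\theta$ of Section~3, which preserves $\des$ and sends $\Des(T)$ to $\{kd-i: i\in\Des(T)\}$.

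Once repaired, your route differs from the paper's mainly in packaging. The paper quotes Stanley's Proposition~7.19.12, which is exactly this $(P,\omega)$-computation stated for $s_\lambda(1,q,\dots,q^m)$. It applies it at $q^{-1}$ to obtain $\comaj$, then evaluates $s_{(k^d)}$ by the hook-content principal specialization. You would do the decomposition by hand and use MacMahon's formula, which is the same principal specialization in different form.
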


\begin{proof}
	We begin with a generic partition $\lambda$ and a generic nonnegative integer $m$. Note that
	\begin{align*}
		&\sum_{T\in \SYT(\lambda)} q^{\comaj(T)} \qbinom{m-\des(T)+|\lambda|}{|\lambda|}_q\\
		&\qquad = \sum_{T\in \SYT(\lambda)} q^{\des(T)\cdot |\lambda| - \maj(T)} q^{(m-\des(T))\cdot |\lambda|} \qbinom{m-\des(T)+|\lambda|}{|\lambda|}_{q^{-1}},
	\end{align*}
	where the \emph{major index} of $T$ is given by
	\begin{align*}
		\maj(T) := \sum_{i\in \Des(T)} i.
	\end{align*}
	Thus,
	\begin{align*}
		&\sum_{T\in \SYT(\lambda)} q^{\comaj(T)} \qbinom{m-\des(T)+|\lambda|}{|\lambda|}_q\\
		&\qquad = q^{m|\lambda|} \sum_{T\in \SYT(\lambda)} q^{-\maj(T)} \qbinom{m-\des(T)+|\lambda|}{|\lambda|}_{q^{-1}}.
	\end{align*}
	In light of \cite[p.~364, Proposition~7.19.12]{Sta2024}, the sum on the right-hand side of the above can be expressed in terms of a \emph{Schur function}:
	\begin{align}
		\sum_{T\in \SYT(\lambda)} q^{\comaj(T)} \qbinom{m-\des(T)+|\lambda|}{|\lambda|}_q = q^{m|\lambda|} s_\lambda (1,q^{-1},\ldots,q^{-m}).
	\end{align}
	
	According to the above discussions, we have
	\begin{align*}
		\RHS\eqref{eq:gf-middle} = q^{kd(n+d-1) - k\binom{d}{2}} s_{(k^d)}(1,q^{-1},\ldots,q^{-(n+d-1)}).
	\end{align*}
	Now we recall the principal evaluation of Schur functions \cite[p.~374, Theorem~7.21.2]{Sta2024} so that
	\begin{align*}
		s_{(k^d)}(1,q^{-1},\ldots,q^{-(n+d-1)}) &= q^{-k\binom{d}{2}} \prod_{i=1}^d \prod_{j=1}^k \frac{1-q^{-(n+d+j-i)}}{1-q^{-(k+d-i-j+1)}}\\
		&= q^{-k\binom{d}{2}} \prod_{i=1}^d \prod_{j=1}^k \frac{1-q^{-(n+i+j-1)}}{1-q^{-(i+j-1)}}.
	\end{align*}
	Therefore,
	\begin{align*}
		\RHS\eqref{eq:gf-middle} = q^{kd(n+d-1) - 2k\binom{d}{2}} \prod_{i=1}^d \prod_{j=1}^k \frac{1-q^{-(n+i+j-1)}}{1-q^{-(i+j-1)}} = \prod_{i=1}^d \prod_{j=1}^k \frac{1-q^{n+i+j-1}}{1-q^{i+j-1}}.
	\end{align*}
	
	Finally, we know from the definition \eqref{eq:q-Hoggatt-def} that
	\begin{align*}
		\LHS\eqref{eq:gf-middle} = \qangle{n+k}{k}_q^{(d)} = \prod_{j=0}^{d-1} \frac{\qbinom{n+k+j}{k}_q}{\qbinom{k+j}{k}_q} = \prod_{j=1}^d \prod_{i=1}^k \frac{1-q^{n+i+j-1}}{1-q^{i+j-1}},
	\end{align*}
	thereby giving the claimed relation \eqref{eq:gf-middle} by interchanging the indices $i$ and $j$.
\end{proof}

In view of \eqref{eq:gf-middle}, we have
\begin{align*}
	\sum_{n\ge 0} \qangle{n+k}{k}_q^{(d)} x^n = \sum_{T\in \SYT(k^d)} q^{\comaj(T)-k \binom{d}{2}} \sum_{n\ge 0} \qbinom{(n+d-1)-\des(T)+kd}{kd}_q x^n.
\end{align*}
To simplify the inner sum on the right-hand side, we need to determine the range of $\des(T)$.

\begin{lemma}\label{le:des-bound}
	For standard Young tableaux $T\in \SYT(k^d)$, we have
	\begin{align}\label{eq:des-bound}
		d-1 \le \des(T) \le k(d-1).
	\end{align}
	Moreover, both inequalities are attainable, each by a unique standard Young tableau.
\end{lemma}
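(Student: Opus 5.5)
We need to bound $\des(T)$ for $T\in\SYT(k^d)$, where $i\in\Des(T)$ means $r_{i+1}>r_i$, i.e.\ $i+1$ lies in a strictly lower row than $i$. I will prove the two inequalities separately, each by a counting argument on the sequence of rows $r_1,r_2,\ldots,r_{kd}$, and identify the extremal tableaux along the way.

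\emph{Lower bound.} Since $1$ occupies box $(1,1)$ and $kd$ occupies box $(d,k)$, we have $r_1=1$ and $r_{kd}=d$. Each descent raises the row index by some positive amount, and non-descents do not raise it. More usefully, the smallest entry of row $j+1$ is the box $(j+1,1)$, and it is preceded by an entry in a row $\le j+1$. I will argue directly: for each $j=1,\ldots,d-1$, let $a_j=T(j+1,1)$. The entry $a_j-1$ cannot lie in row $j+1$ or below, since those rows have all entries at least $a_j$ (column strictness gives $T(j',1)>T(j+1,1)$ for $j'>j+1$). Hence $r_{a_j-1}<r_{a_j}$, so $a_j-1\in\Des(T)$. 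The $a_j$ are distinct, so $\des(T)\ge d-1$. Equality forces every other $i$ to be a non-descent, so $r$ is weakly decreasing except at these $d-1$ jumps. I will then show this forces the row-reading tableau $T(i,j)=(i-1)k+j$. Suppose some row $j<d$ is not exhausted before $a_j$. Then a later entry in a row $\le j$ must be reached from row $\ge j+1$ by a non-descent, which is allowed. So instead I argue by induction: the entries $1,\ldots,a_1-1$ all lie in row $1$, because before $a_1$ only row 1 is available. After $a_1$, $r$ never increases except at $a_2-1,\ldots$. Once we are in row $2$, we can only drop to row 1. If row 1 were incomplete, the remaining row-1 entries would come after $a_1$, giving a return to row 1, and then a later ascent back to row $2$ would be an extra descent. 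That extra ascent is needed because $r_{kd}=d$. Uniqueness then follows by induction on rows.

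\emph{Upper bound.} Note that $i$ is a non-descent iff $r_{i+1}\le r_i$. I will show there are at least $k-1$ non-descents, so $\des(T)\le kd-1-(k-1)=k(d-1)$. For $c=1,\ldots,k-1$, let $b_c=T(1,c+1)$. Its predecessor $b_c-1$ lies in some row $\ge1=r_{b_c}$, so $b_c-1$ is a non-descent. The $b_c$ are distinct, so there are at least $k-1$ non-descents. This is the transpose-dual of the lower-bound argument. Equality means every other $i$ is a descent. The same inductive argument, applied to columns with first-row entries as the ``resets'', forces the column-reading tableau $T(i,j)=(j-1)d+i$, which indeed has $\des=k(d-1)$.

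The main obstacle is the uniqueness part: making the ``forced filling'' induction clean. I would phrase it as follows. In the equality case for the lower bound, the set of $i$ with $r_{i+1}>r_i$ is exactly $\{a_j-1\}$. Thus between consecutive $a_j$'s the row sequence is weakly decreasing and stays $\le j+1$. I would then show by induction on $j$ that rows $1,\ldots,j$ are full before $a_j$. If not, some entry of a row $\le j$ is $>a_j$. After that entry, the sequence must climb back to row $d$, and it can do so only at the positions $a_{j'}-1$. But $r$ jumps there to exactly $j'+1$ from a row that, by weak decrease, is already too low. This gives a count contradiction. The upper-bound case is symmetric with columns in place of rows.
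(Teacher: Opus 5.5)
Your proofs of the two inequalities are correct and match the paper's: the predecessors $T(j+1,1)-1$ of the first-column entries are forced descents, and the predecessors $T(1,c+1)-1$ of the first-row entries are forced non-descents. The gap is in the uniqueness of the extremal tableaux, which you flag as the main obstacle but do not actually prove.

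Your first argument has a step that fails for $d\ge 3$. Suppose row $1$ is not full before $a_1$ and the row sequence later returns to row $1$. The next ascent need not be an ``extra'' descent: it can occur exactly at a permitted position $a_{j'}-1$ and jump past row $2$ to row $j'+1$. For instance, a row word beginning $1^m2^p1^q3\cdots$ uses only the allowed descents at $a_1-1$ and $a_2-1$.

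Your refined version (``$r$ jumps there to exactly $j'+1$ from a row that is already too low; this gives a count contradiction'') does not say what is counted. Descent positions alone cannot give a contradiction. Any row sequence that jumps to $j+1$ at $a_j$ and is weakly decreasing on each interval $[a_j,a_{j+1}-1]$ has exactly $d-1$ descents by construction. What rules out the bad sequences is the tableau structure (row lengths, where $kd$ sits, which rows can occur in which interval), and your sketch never uses it. Your transposition argument for the upper bound is legitimate, since transposing swaps descents and non-descents. But it only transfers a lower-bound uniqueness proof, which is the part that is missing.

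The missing idea in the paper is a second family of forced descents. The last entry $T(i,k)$ of each row $i\le d-1$ is a descent: every entry of rows $1,\ldots,i$ is at most $T(i,k)$, so $T(i,k)+1$ lies strictly lower. At equality, these $d-1$ increasing descents must coincide with $a_1-1<\cdots<a_{d-1}-1$, so $T(i,k)=T(i+1,1)-1$. Since $T(i,k)$ is the maximum of rows $1,\ldots,i$ and $T(i+1,1)$ is the minimum of rows $i+1,\ldots,d$, rows $1,\ldots,i$ hold exactly $\{1,\ldots,ik\}$ for every $i$. This forces the row-reading tableau. Dually, the last-row entries $T(d,c)$ with $c\le k-1$ are forced non-descents, which gives the column-reading tableau.

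Alternatively, your segment picture works if you argue bottom-up instead of top-down. At equality, $r$ is weakly decreasing on $[a_{d-1},kd]$ and equals $d$ at both ends, so that whole interval lies in row $d$. Row $d$ has no entry below $a_{d-1}=T(d,1)$, so row $d$ is exactly $\{(d-1)k+1,\ldots,dk\}$. Deleting it leaves a tableau in $\SYT(k^{d-1})$ with $d-2$ descents, and induction on $d$ finishes the argument.
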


\begin{proof}
	For the lower bound, we assume that in the standard Young tableau $T$, the entries in the first column are $u_1 < u_2 < \cdots < u_d$. For each $u_{i+1}-1$ with $1\le i\le d-1$, it is clear that its row index must be smaller than that of $u_{i+1}$, thereby implying that $u_{i+1}-1$ is a descent. Hence,
	\begin{align*}
		\des(T) \ge \big|\{u_2-1,u_3-1,\ldots, u_d-1\}\big| = d-1.
	\end{align*}
	On the other hand, suppose the entries in the last column are $u'_1 < u'_2 < \cdots < u'_d$. It is also true that all $u'_1,\ldots,u'_{d-1}$ are descents. To ensure $\des(T) = d-1$, we must have $u'_i = u_{i+1} - 1$ for $1\le i\le d-1$. In this situation, the only possibility is the standard Young tableau $T^\dagger$ of shape $(k^d)$ in which the entry in the $i$-th row and $j$-th column is $j+(i-1)k$.
	
	For the upper bound, we assume that the first row of the standard Young tableau $T$ is filled with $v_1 < v_2 < \cdots < v_k$. For each $v_{j+1}-1$ with $1\le j\le k-1$, its row index is at least $1$, which is never smaller than that of $v_{j+1}$. Thus,
	\begin{align*}
		\des(T) \le \big|\{1,2,\ldots,kd-1\}\backslash \{v_2-1,v_3-1,\ldots, v_k-1\}\big| = k(d-1).
	\end{align*}
	Suppose further that the entries in the last row are $v'_1 < v'_2 < \cdots < v'_k$. It is clear that $v'_1,\ldots,v'_{k-1}$ are non-descents. Thus, to ensure $\des(T) = k(d-1)$, we must have $v'_j = v_{j+1}-1$. Then the only possible standard Young tableau $T^\ddagger$ is such that the $(i,j)$-th box is filled with $i+(j-1)d$.
\end{proof}

According to \eqref{eq:des-bound}, for every $T\in \SYT(k^d)$,
\begin{align*}
	(d-1) - \des(T) \le 0.
\end{align*}
Thus,
\begin{align*}
	\sum_{n\ge 0} \qbinom{(n+d-1)-\des(T)+kd}{kd}_q x^n = x^{\des(T)-(d-1)} \sum_{i\ge 0} \qbinom{i+kd}{kd}_q x^{i}.
\end{align*}
Now we can apply the following evaluation of \emph{$q$-binomial series} \cite[p.~36, eq.~(3.3.7)]{And1998}:
\begin{align*}
	\sum_{i\ge 0} \qbinom{i+j}{j}_q x^i = \frac{1}{(x;q)_{j+1}},
\end{align*}
from which we get
\begin{align*}
	\sum_{n\ge 0} \qbinom{(n+d-1)-\des(T)+kd}{kd}_q x^n = \frac{x^{\des(T)-(d-1)}}{(x;q)_{kd+1}}.
\end{align*}

Finally, we conclude that
\begin{align*}
	\sum_{n\ge 0} \qangle{n+k}{k}_q^{(d)} x^n = \frac{1}{(x;q)_{kd+1}} \sum_{T\in \SYT(k^d)} q^{\comaj(T)-k \binom{d}{2}} x^{\des(T)-(d-1)},
\end{align*}
thereby producing the desired relation \eqref{eq:gf-main}. In addition, we have the coefficient of the lowest power of $x$:
\begin{align*}
	c_{k,0}^{(d)}(q) = q^{\comaj(T^\dagger)-k \binom{d}{2}} = 1,
\end{align*}
while the coefficient of the highest power of $x$ is
\begin{align*}
	c_{k,(k-1)(d-1)}^{(d)}(q) = q^{\comaj(T^\ddagger)-k \binom{d}{2}} = q^{2\binom{k}{2}\binom{d}{2}};
\end{align*}
here the standard Young tableaux $T^\dagger$ and $T^\ddagger$ are as in the proof of Lemma~\ref{le:des-bound}.

\section{Main Theorem, Part II: Palindromicity}\label{sec:palindromicity}

A direct indication of the relation \eqref{eq:gf-main} and the bounds \eqref{eq:des-bound} is that Cigler's series $C_k^{(d)}(x,q)$ is in $\mathbb{N}[q,q^{-1}][x]$, while the highest exponent of $x$ is $(k-1)(d-1)$. Now we shall show that the coefficients $c_{k,j}^{(d)}(q)$ of $x^j$ in $C_k^{(d)}(x,q)$ are indeed \emph{polynomials} in $\mathbb{N}[q]$ instead of Laurent polynomials. For this purpose, we have to bound $\comaj(T)$ for standard Young tableaux $T\in \SYT(k^d)$.

\begin{lemma}
	For standard Young tableaux $T\in \SYT(k^d)$, we have
	\begin{align}\label{eq:comaj-bound}
		\comaj(T) \ge k \binom{d}{2}.
	\end{align}
\end{lemma}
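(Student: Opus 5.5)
The plan is to bound $\comaj(T)$ from below using only the $d-1$ descents found in the proof of Lemma~\ref{le:des-bound}. Every other descent contributes a nonnegative term, so they can be discarded. The value $k\binom{d}{2}$ is exactly $\comaj(T^\dagger)$, whose descents are $k,2k,\ldots,(d-1)k$, and this suggests the shape of the estimate.

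As in Lemma~\ref{le:des-bound}, let $u_1<u_2<\cdots<u_d$ be the entries of the first column of $T$, so that $u_i$ sits in box $(i,1)$. We already know that $u_{i}-1\in\Des(T)$ for each $2\le i\le d$, since $u_i-1$ lies in a row strictly above row $i$. Every summand $|\lambda|-i$ with $i\in\Des(T)$ is positive. Hence
\begin{align*}
	\comaj(T)\ \ge\ \sum_{i=2}^{d}\big(kd-(u_i-1)\big).
\end{align*}

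The key step is an upper bound on $u_i$. In a standard Young tableau of rectangular shape, every box $(r,s)$ with $r\ge i$ lies weakly south-east of $(i,1)$. Its entry is therefore at least $u_i$, with equality only at $(i,1)$ itself. There are $(d-i+1)k$ such boxes, so at least $(d-i+1)k$ distinct entries from $\{1,\ldots,kd\}$ are $\ge u_i$. This forces
\begin{align*}
	u_i\le kd-(d-i+1)k+1=(i-1)k+1.
\end{align*}
Consequently each summand satisfies $kd-u_i+1\ge (d-i+1)k$.

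Summing over $2\le i\le d$ then gives
\begin{align*}
	\comaj(T)\ \ge\ k\sum_{m=1}^{d-1}m=k\binom{d}{2},
\end{align*}
which is the claim. Equality holds for $T^\dagger$, so the bound is sharp. I do not expect any real obstacle. The only point needing care is checking that the counting bound on $u_i$ uses the rectangular shape, specifically that all rows below row $i$ have full length $k$.
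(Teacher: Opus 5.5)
Your proof is correct, but it takes a different route from the paper's. The paper writes $\comaj(T)=\sum_{m=1}^{kd}\bigl|\{l\in\Des(T):l<m\}\bigr|$ and argues entry by entry. If $m$ lies in row $r_m$, there is a descent between any two consecutive entries of its column lying weakly above $m$, so at least $r_m-1$ descents precede $m$. Summing $r_m-1$ over all boxes gives $k\binom{d}{2}$. You instead fix the $d-1$ first-column descents $u_i-1$ from Lemma~\ref{le:des-bound}, discard all other descents, and control the positions of the chosen ones through $u_i\le (i-1)k+1$. Both proofs bound from below the number of pairs $(l,m)$ with $l\in\Des(T)$ and $l<m$, which is exactly $\comaj(T)$. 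The paper attaches descents to each entry, while you attach to each chosen descent all entries in rows $\ge i$. Your route gives a transparent equality case: equality forces $\Des(T)=\{u_2-1,\ldots,u_d-1\}$, so $\des(T)=d-1$ and $T=T^\dagger$ by Lemma~\ref{le:des-bound}. The paper's route is more local and needs no positional estimate. Finally, your counting step does not actually need the rectangular shape. For any shape $\lambda$, every box in rows $\ge i$ carries an entry $\ge u_i$, so $|\lambda|-u_i+1\ge\sum_{r\ge i}\lambda_r$. Summing over $i\ge 2$ gives $\comaj(T)\ge\sum_{r}(r-1)\lambda_r$, the same general bound the paper's argument yields.
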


\begin{proof}
	We rewrite $\comaj(T)$ as
	\begin{align*}
		\comaj(T) = \sum_{i\in \Des(T)} (kd - i) = \sum_{m=1}^{kd} \big|\{l\in \Des(T): l<m\}\big|.
	\end{align*}
	Now we prove that, for each $m$, there are at least $r_m-1$ descents smaller than $m$, where $m$ is placed in the $(r_m,s_m)$-th box. In the $s_m$-th cloumn, we assume that the first $r_m$ entries are $u_1 < u_2 < \cdots < u_{r_m} =: m$. It is clear that in each interval $\{u_i,u_i+1,\ldots, u_{i+1}-1\}$ with $1\le i\le r_m-1$, there exists at least one descent $l_i$. Moreover, $l_i \le u_{i+1}-1 \le u_{r_m} - 1 < m$. Since there are $r_m-1$ such nonintersecting intervals, we have
	\begin{align*}
		\big|\{l\in \Des(T): l<m\}\big| \ge r_m-1.
	\end{align*}
	Finally,
	\begin{align*}
		\comaj(T) \ge \sum_{m=1}^{kd} (r_m - 1) = k \sum_{i=1}^d (i-1) = k \binom{d}{2},
	\end{align*}
	as claimed.
\end{proof}

Now we have seen that
\begin{align*}
	C_k^{(d)}(x,q) \in \mathbb{N}[q][x].
\end{align*}
Moreover, the coefficients $c_{k,j}^{(d)}(q)$ are given by
\begin{align*}
	c_{k,j}^{(d)}(q) = \sum_{\substack{T \in \SYT(k^d)\\\des(T)=j+d-1}} q^{\comaj(T)-k \binom{d}{2}}.
\end{align*}
To show these coefficients are palindromic, we need the following observation.

\begin{lemma}
	Define a map $\theta: \SYT(k^d) \to \SYT(k^d)$ by
	\begin{align*}
		\theta(T)(r,s) := kd + 1 - T(d+1-r, k+1-s).
	\end{align*}
	Then $\theta$ is an involution. In addition,
	\begin{align}\label{eq:theta-des}
		\des(\theta(T)) = \des(T),
	\end{align}
	and
	\begin{align}\label{eq:theta-comaj}
		\comaj(\theta(T)) = kd \des(T) - \comaj(T).
	\end{align}
\end{lemma}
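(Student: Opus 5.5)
We first check that $\theta$ is well defined, then that it is an involution, and then compute the descent set of $\theta(T)$ directly; both \eqref{eq:theta-des} and \eqref{eq:theta-comaj} will follow from one description of $\Des(\theta(T))$.

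\textbf{Well defined and involutive.} Put $N:=kd$. The map $(r,s)\mapsto(d+1-r,k+1-s)$ is a bijection of the rectangle $(k^d)$ onto itself, so $\theta(T)$ uses each of $1,\ldots,N$ exactly once. Rotation by $180^\circ$ reverses the order along every row and every column, and complementing $i\mapsto N+1-i$ reverses it again. Hence the entries of $\theta(T)$ increase strictly along rows and down columns, so $\theta(T)\in\SYT(k^d)$. Applying $\theta$ twice returns $N+1-(N+1-T(r,s))=T(r,s)$, so $\theta$ is an involution.

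\textbf{Locating entries.} Write $\rho'_i=(r'_i,s'_i)$ for the box of $i$ in $\theta(T)$. Since $\theta(T)(r,s)=i$ exactly when $T(d+1-r,k+1-s)=N+1-i$, we get
\begin{align*}
	r'_i = d+1-r_{N+1-i}.
\end{align*}

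\textbf{Descent set.} For $1\le i\le N-1$, the definition gives $i\in\Des(\theta(T))$ iff $r'_{i+1}>r'_i$. By the formula above this is $d+1-r_{N-i}>d+1-r_{N+1-i}$, i.e.\ $r_{N+1-i}>r_{N-i}$, i.e.\ $N-i\in\Des(T)$. Therefore
\begin{align*}
	\Des(\theta(T)) = \{N-j : j\in\Des(T)\}.
\end{align*}
This is the one step needing care: the index shift must land on $N-i$ and not $N+1-i$, so I will double-check it.

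\textbf{Consequences.} The map $j\mapsto N-j$ is a bijection from $\Des(T)$ onto $\Des(\theta(T))$, so $\des(\theta(T))=\des(T)$, which is \eqref{eq:theta-des}. For the comajor index,
\begin{align*}
	\comaj(\theta(T))=\sum_{j\in\Des(T)}\big(N-(N-j)\big)=\sum_{j\in\Des(T)} j = \sum_{j\in\Des(T)}\big(N-(N-j)\big)\cdot 1,
\end{align*}
and since $\sum_{j\in\Des(T)} j = N\des(T)-\sum_{j\in\Des(T)}(N-j)$, this equals $kd\,\des(T)-\comaj(T)$, which is \eqref{eq:theta-comaj}.
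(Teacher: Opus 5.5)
Your proof is correct and takes the same route as the paper. Like the paper, you identify $\Des(\theta(T))=\{kd-i: i\in\Des(T)\}$ and read off both identities, but you also carry out the checks the paper leaves as direct verifications: well-definedness via the order-reversing rotation and complement, and the index computation $r'_i=d+1-r_{kd+1-i}$. The third expression in your comajor display (the one ending in $\cdot 1$) is redundant and can be deleted.
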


\begin{proof}
	It is a direct verification that $\theta$ is an involution on $\SYT(k^d)$. Now we check the two relations on statistics. First,
	\begin{align*}
		\Des(\theta(T)) = \{kd-i: i\in \Des(T)\},
	\end{align*}
	so that \eqref{eq:theta-des} holds. Next,
	\begin{align*}
		\comaj(\theta(T)) = \sum_{i'\in \Des(\theta(T))} (kd-i') = \sum_{i\in \Des(T)} i,
	\end{align*}
	thereby leading us to \eqref{eq:theta-comaj}.
\end{proof}

Note that the map $\theta$, if restricted to the subset of standard Young tableaux
\begin{align*}
	\{T\in \SYT(k^d): \des(T)=j+d-1\},
\end{align*}
is still an involution. Moreover,
\begin{align*}
	\left(\comaj(T)-k \binom{d}{2}\right) + \left(\comaj(\theta(T))-k \binom{d}{2}\right) = kd(j+d-1) - 2k \binom{d}{2} = kdj,
\end{align*}
which is independent of $T$. These facts tell us that in $c_{k,j}^{(d)}(q)$, the sum of the lowest and highest exponents of $q$ is $kdj$. Also, by pairing $T$ and $\theta(T)$, we have
\begin{align*}
	c_{k,j}^{(d)}(q) = q^{kdj} c_{k,j}^{(d)}(q^{-1}),
\end{align*}
and hence conclude the palindromicity.

\section{Closing remarks}\label{sec:rmk}

In \cite{Cig2021}, Cigler also considered a family of Fibonacci--Hoggatt numbers. For nonzero numbers $a$ and $b$, let $(F_{a,b}(n))_{n\ge 0}$ be a sequence defined by the recurrence
\begin{align*}
	F_{a,b}(n) = aF_{a,b}(n-1) + bF_{a,b}(n-2),
\end{align*}
together with the initial values $F_{a,b}(0) = 0$ and $F_{a,b}(1) = 1$. When $a=b=1$, the sequence reduces to the conventional \emph{Fibonacci numbers}.

In analogy to \eqref{eq:q-Hoggatt-def}, define the \emph{Fibonacci--Hoggatt numbers} by
\begin{align*}
	\qangle{n}{k}_{F_{a,b}}^{(d)} := \frac{\langle n\rangle_{F_{a,b}}^{(d)}!}{\langle k\rangle_{F_{a,b}}^{(d)}! \langle n-k\rangle_{F_{a,b}}^{(d)}!} = \prod_{j=0}^{d-1} \frac{\qbinom{n+j}{k}_{F_{a,b}}}{\qbinom{k+j}{k}_{F_{a,b}}},
\end{align*}
where
\begin{align*}
	\langle n\rangle_{F_{a,b}}^{(d)} := \qbinom{n+d-1}{d}_{F_{a,b}},\qquad\qquad \langle n\rangle_{F_{a,b}}^{(d)}! := \prod_{j=1}^n \langle j\rangle_{F_{a,b}}^{(d)},
\end{align*}
with
\begin{align*}
	\qbinom{n}{k}_{F_{a,b}}:=\begin{cases}
		\displaystyle \prod_{j=0}^{k-1} \frac{F_{a,b}(n-j)}{F_{a,b}(k-j)}, & \text{if $0\le k\le n$},\\[10pt]
		0, & \text{otherwise}.
	\end{cases}
\end{align*}

Note that the numbers $F_{a,b}(n)$ can be expressed by \emph{Binet's formula}:
\begin{align*}
	F_{a,b}(n) = \frac{\alpha^n - \beta^n}{\alpha-\beta},
\end{align*}
where
\begin{align*}
	\alpha := \frac{a+\sqrt{a^2+4b}}{2}, \qquad\qquad \beta := \frac{a-\sqrt{a^2+4b}}{2}.
\end{align*}
Putting
\begin{align*}
	q:= \frac{\beta}{\alpha},
\end{align*}
and noting that $\alpha\beta = -b$, we have
\begin{align*}
	F_{a,b}(n) = \left(-\frac{b}{q}\right)^{\frac{n-1}{2}} \frac{1-q^n}{1-q},
\end{align*}
so that
\begin{align*}
	\qbinom{n}{k}_{F_{a,b}} = \left(-\frac{q}{b}\right)^{\frac{k(k-n)}{2}} \qbinom{n}{k}_q.
\end{align*}

In \cite{Cig2021}, Cigler considered the following series
\begin{align}\label{eq:G-def}
	G_{k,F_{a,b}}^{(d)}(x) := \left(\sum_{m=0}^{kd+1}(-1)^{\binom{m+1}{2}} b^{\binom{m}{2}} \qbinom{kd+1}{m}_{F_{a,b}} x^m\right) \left(\sum_{n\ge 0} \qangle{n+k}{k}_{F_{a,b}}^{(d)} x^n\right).
\end{align}
If we put
\begin{align*}
	y:= \left(-\frac{b}{q}\right)^{\frac{kd}{2}} x,
\end{align*}
it follows from a direct computation that
\begin{align*}
	G_{k,F_{a,b}}^{(d)}(x) = (y;q)_{kd+1} \sum_{n\ge 0} \qangle{n+k}{k}_q^{(d)} y^n,
\end{align*}
where we have applied the following evaluation \cite[p.~36, eq.~(3.3.6)]{And1998} to the first sum in \eqref{eq:G-def}:
\begin{align*}
	\sum_{i=0}^j \qbinom{j}{i}_q (-1)^i z^i q^{\binom{i}{2}} = (z;q)_j.
\end{align*}

Note that $y$ and $x$ only differ by a scalar. By virtue of Theorem~\ref{th:main}, we may immediately confirm the following conjecture of Cigler~\cite[Conjecture~18]{Cig2021}, which also reduces to \cite[Conjecture~14]{Cig2021} at the $a=b=1$ specialization.

\begin{theorem}
	$G_{k,F_{a,b}}^{(d)}(x)$ is a polynomial in $x$ of degree $(k-1)(d-1)$ with constant term being $1$ and the coefficient of the highest power of $x$ being $b^{2\binom{k}{2}\binom{d}{2}}$.
\end{theorem}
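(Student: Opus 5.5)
The plan is to transport everything to the $q$-setting through the identity just established,
\begin{align*}
	G_{k,F_{a,b}}^{(d)}(x) = (y;q)_{kd+1} \sum_{n\ge 0} \qangle{n+k}{k}_q^{(d)} y^n = C_k^{(d)}(y,q), \qquad y = \left(-\frac{b}{q}\right)^{\frac{kd}{2}} x,
\end{align*}
and then read off the degree and the two extreme coefficients from Theorem~\ref{th:main}. By Theorem~\ref{th:main}, $C_k^{(d)}(y,q)=\sum_{j=0}^{(k-1)(d-1)} c_{k,j}^{(d)}(q)\, y^j$ is a polynomial in $y$ of degree exactly $(k-1)(d-1)$, with $c_{k,0}^{(d)}(q)=1$ and $c_{k,(k-1)(d-1)}^{(d)}(q)=q^{2\binom{k}{2}\binom{d}{2}}$. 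Since $y$ is a nonzero scalar multiple of $x$ (because $b\neq 0$ and $q=\beta/\alpha\neq 0$), $G_{k,F_{a,b}}^{(d)}(x)$ is a polynomial in $x$ of the same degree, and its constant term is $c_{k,0}^{(d)}(q)=1$.

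For the leading coefficient, I substitute $y^j = (-b/q)^{kdj/2}x^j$ with $j=(k-1)(d-1)$. This gives
\begin{align*}
	q^{2\binom{k}{2}\binom{d}{2}} \left(-\frac{b}{q}\right)^{\frac{kd(k-1)(d-1)}{2}}.
\end{align*}
Since $2\binom{k}{2}\binom{d}{2} = \tfrac{k(k-1)d(d-1)}{2}=:N$, this equals $q^{N}(-b)^{N}q^{-N} = (-1)^N b^{N}$. Here $N = 2\binom{k}{2}\binom{d}{2}$ is even, so the sign disappears and the coefficient is $b^{2\binom{k}{2}\binom{d}{2}}$, as claimed.

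The main obstacle is the bookkeeping of the half-integer powers $(-b/q)^{\cdot/2}$. They appear both in $\qbinom{n}{k}_{F_{a,b}}$ and in the definition of $y$, and one must fix a consistent branch of the square root, say $\sqrt{-b/q}$, so that every factor is an integer power of it. I would therefore write every exponent as an integer multiple of $\tfrac12$ and check that the final exponent $N$ is an integer, and in fact even, so that no branch ambiguity remains in the leading coefficient. If the displayed identity for $G$ needs justification beyond the stated ``direct computation'', I would verify it by comparing the prefactors: one checks that $\qbinom{kd+1}{m}_{F_{a,b}}$ times $(-1)^{\binom{m+1}{2}}b^{\binom m2}$ becomes $(-1)^m q^{\binom m2}\qbinom{kd+1}{m}_q y^m/x^m$, and that each Hoggatt factor $\qangle{n+k}{k}_{F_{a,b}}^{(d)}$ becomes $(-b/q)^{kdn/2}\qangle{n+k}{k}_q^{(d)}$. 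The latter follows from the product formula, since the exponent contributions $\sum_j \tfrac{k(-n-j)+k(j)}{2}\cdot(-1)$ sum to $kdn/2$.
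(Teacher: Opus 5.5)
Your proposal is correct and follows the paper's own argument. You reduce to Theorem~\ref{th:main} via the substitution $y=(-b/q)^{kd/2}x$, read off the degree and the constant term, and get the leading coefficient $q^{N}(-b/q)^{N}=(-b)^{N}=b^{N}$ because $N=2\binom{k}{2}\binom{d}{2}$ is even; your check of the ``direct computation'' for $G$ and of the branch choice (naturally $\sqrt{-b/q}=\alpha$, since $\alpha^2=-b/q$) usefully fills in what the paper leaves implicit.
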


\subsection*{Acknowledgements}

Shane Chern was supported by the Austrian Science Fund (No.~10.55776/F1002). Wenle Shi was supported by the China Scholarship Council (No.~202506060077).

\bibliographystyle{amsplain}

\end{document}